\numberwithin{equation}{section}
\theoremstyle{plain}
\newtheorem{theorem}[equation]{Theorem}
\newtheorem{lemma}[equation]{Lemma}
\newtheorem{corollary}[equation]{Corollary}
\newcommand{\Hom}{\operatorname{Hom}}
\theoremstyle{definition}
\newtheorem{definition}[equation]{Definition}
\newtheorem{example}[equation]{Example}
\theoremstyle{remark}
\newcommand{\xrto}{\xrightarrow}
\def\C{\mathcal{C}}
\def\G{\mathcal{G}}
\def\P{\mathcal{P}}
\def\D{\mathcal{D}}
\def\F{\mathcal{F}}
\def\A{\mathcal{A}}
\def\T{\mathcal{T}}
\def\X{\mathcal{X}}
\def\Y{\mathcal{Y}}
\begin{document}

\title [\tiny{Complete cotorsion pairs in exact categories}]{Complete cotorsion pairs in exact categories }
\author [\tiny{Zhi-Wei Li}] {Zhi-Wei Li}
%\thanks{$^*$ Corresponding to: }

\date{\today}
\thanks{The author was supported by National Natural Science Foundation
of China (No.s 11671174 and 11571329).}
\address{Zhi-Wei Li\\ School of Mathematics and Statistics \\
 Jiangsu Normal University,
Xuzhou 221116, Jungian, PR China.}
\email{zhiweili@jsnu.edu.cn}
\subjclass[2010]{18E10, 18G25, 18G35}
\date{\today}
\keywords{exact categories; cotorsion pairs; small object argument}%

%\address{}

\maketitle

\begin{abstract} We show a cotorsion pair cogenerated by a class is complete under suitable conditions in an arbitrary exact category using the generalized small object argument given by Chorny. This recovers Saor\'in and \v{S}\v{t}ov\'{i}\v{c}ek's criterion of the completeness of cotorsion pairs in their efficient exact categories. Examples in the categories of chain complexes of exact categories are given.
\end{abstract}

\setcounter{tocdepth}{1}

\section{Introduction}
Ever since Salce introduced the notion of a cotorsion pair in the late 1970's \cite{Salce}, the significance of complete cotorsion pairs has been widely understood in approximation theory \cite{Gobel/Trlifaj}, especially in the proof of the flat cover conjecture \cite{BBE}. 

  One fundamental result on completeness of cotorsion pairs is due to Eklof and Trlifaj, they proved that any cotorsion pair cogenerated by a set of modules is complete \cite{ET}, which is a generalization of the corresponding result of G\"obel and Shelah on abelian groups in \cite{Gobel/Shelah}. Hovey extended this result to Grothendieck categories \cite{Hovey02}, and later, Saor\'in and \v{S}\v{t}ov\'{i}\v{c}ek generalized it further to their efficient exact categories \cite{Saorin/Stovicek}.

However, the condition that inflations are closed under transfinite compositions for an efficient exact category is somewhat ironically. For example, for a Grothendieck category $\G$ with a generator $G$, the category ${\rm Ch}(\G_G)$ of chain complexes of the $G$-exact category $\G_G$ considered in \cite{Gillespie14} is not necessarily efficient if $G$ is not finitely presented, even it is bicomplete. 

On the other hand, for the category ${\rm Ch}(\A)$ of chain complexes of a bicomplete abelian category $\A$ determined by a projective class $\P$, Christensen and Hovey constructed a relative closed model structure by defining weak equivalences as the $\P$-equivalences and fibrations the $\P$-fibrations \cite{Christensen/Hovey}. Apparently, we can define a $\P$-exact category $\A_\P$ where a conflation is a short exact sequence in $\A$ which is $\P$-exact, and we want to know whether the relative closed model structure on ${\rm Ch}(\A)$ is just the standard projective exact closed model structure on ${\rm Ch}(\A_\P)$. But this relates the completeness of cotorsion pairs cogenerated by classes (not necessarily sets).

This paper aims to give a criterion of the completeness of cotorsion pairs cogenerated by classes in an arbitrary exact categories. The basic tool is the generalized small object argument given by Chorny in \cite{Chorny}. Our main result is the following.

\vskip5pt
\noindent{\bf Theorem} (\ref{thm:ccp}) {\it  Let $\A$ be an exact category. Let $I$ be a homological class of inflations which permits the generalized small object argument. Denote $\F=({\rm Cok}(I))^{\perp}$.

$(\mathrm{i})$ If each relative $I$-cell complex with codomain in $\F$ is an inflation and ${^\perp\F}$ is a class of generators, then $({^\perp\F}, \F)$ is a complete cotorsion pair in $\A$.

$(\mathrm{ii})$ If each morphism in $I$-inj with domain in ${\rm Cell}(I)$ is a deflation and $\F$ is a class of cogenerators, then $({\rm Cof}(I), \F)$ is a complete cotorsion pair in $\A$.

$(\mathrm{iii})$ If each relative $I$-cell complex with codomain in $\F$ is an inflation, ${\rm Cof}(I)$ is a class of generators and $\A$ is WIC, then $({\rm Cof}(I), \F)$ is a complete cotorsion pair in $\A$.

$(\mathrm{iv})$ If each relative $I$-cell complex with codomain in $\F$ is an inflation and each morphism in $I$-inj with domain in ${\rm Cell}(I)$ is a deflation, then $({\rm Cof}(I), \F)$ is a complete cotorsion pair in $\A$.}
\vskip5pt

The contents of the paper are as follows: In Section 2, we define the necessary notation, explain the notion of a class of morphisms which permits the generalized small object argument and restate Chorny's generalized small object argument. Section 3 is devoted to the proof of our main result. In the last Section, we give some examples 
of complete cotorsion pairs in the categories of complexes of (relative) exact categories. 
\vskip5pt
Throughout this paper, all colimits in concern are small colimits.

\subsection*{Acknowledgements}  The author would like to thank Henning Krause, Xiao-Wu Chen, Jan \v{S}\v{t}ov\'{i}\v{c}ek and Guodong Zhou for their helpful discussions and comments.

\section{The generalized small object argument}

In this section, we give the generalized small object argument which is essentially proved by Chorny in \cite{Chorny}. We follow the sequence of lemmas from \cite{Hovey99}, extending them to work for the general case. The proofs are essentially the same but we include the general versions here for clarity and convenience of the reader.

.
\subsection{Relative $I$-cell complexes}
Let $\C$ be a category. Suppose $i\colon A\to B$ and $p\colon X\to Y$ are morphisms in $\C$. Given a morphism $(f, g)\colon i\to p$, ie, a commutative diagram in $\C$ of the following form
\[\xymatrixcolsep{2pc}\xymatrix@C14pt@R14pt{A\ar[r]^f\ar[d]_{i}&
X\ar[d]^{p}\\
B \ar[r]^{g} \ar@{.>}[ur]^{h}& Y}
\]
a {\it lift} or {\it lifting} in the diagram is a morphism $h\colon B\to X$ such that $hi=f$ and $ph=g$.
A morphism $i\colon A\to B$ is said to have the {\it left lifting property} (LLP) with respect to another morphism $p\colon X\to Y$ and $p$ is said to have the {\it right lifting property} (RLP) with respect to $i$ if a lift exists in any diagram of the above form.
\begin{definition} (\cite[Definition 2.1.7]{Hovey99}) Let $I$ be a class of morphisms in a category $\C$.

$(1)$\ A morphism is {\it $I$-injective} if it has the RLP with respect to every morphism in $I$. The class of $I$-injective morphisms is denoted $I$-inj.

$(2)$ \ A morphism is an {\it $I$-cofibration} if it has the LLP with respect to every $I$-injective morphism. The class of $I$-cofibrations is denoted $I$-cof.
\end{definition}

If $\C$ has an initial object $0$, an object $A\in \C$ is {\it $I$-cofibrant} if the morphism $0\to A\in I\mbox{-cof}$. The collection of $I$-cofibrants is denoted ${\rm Cof}(I)$.

Suppose $\C$ is a category and $\lambda$ is an ordinal. A functor $X\colon \lambda \to \C$ (ie, a diagram
$$X_0\to X_1\to X_2\to \cdots \to X_\alpha\to \cdots  (\alpha<\lambda)$$
in $\C$) is called a {\it $\lambda$-sequence} if for every limit ordinal $\gamma<\lambda$ the colimit ${\rm colim}_{\alpha<\gamma}X_\alpha$ exists and the induced morphism ${\rm colim}_{\alpha<\gamma}X_\alpha\to X_\gamma$ is an isomorphism.

If a colimit of a $\lambda$-sequence $X$ exists, the morphism $X_0\to {\rm colim}_{\alpha<\lambda}X_\alpha$ is called the {\it transfinite composition} of $X$.

If $\D$ is a collection of morphisms in a category $\C$ and $\lambda$ is an ordinal, a {\it $\lambda$-sequence of morphisms in $\D$} is a $\lambda$-sequence $X_0\to X_1\to X_2\to \cdots \to X_\alpha\to \cdots  (\alpha<\lambda)$ in $\C$ such that each morphism $X_\alpha\to X_{\alpha+1}$ is in $\D$ for $\alpha+1<\lambda$. Then a {\it transfinite composition of morphisms in $\D$} is the transfinite composition of a $\lambda$-sequence $X_0\to X_1\to X_2\to \cdots \to X_\alpha\to \cdots  (\alpha<\lambda)$ of morphisms in $\D$.

\begin{definition}  $($\cite[Definition 2.1.9]{Hovey99}$)$  Let $I$ be a class of morphisms in a category $\C$. Assume that the transfinite compositions of pushouts of morphisms in $I$ exist. A {\it relative $I\mbox{-}cell$ complex} is a transfinite composition of pushouts of morphisms in $I$.
\end{definition}

The collection of relative $I$-cell complexes is denoted $I\mbox{-cell}$. Note that $I\mbox{-cell}$ contains all isomorphisms. If $\C$ has an initial object $0$, an object $A\in \C$ is an {\it $I$-cell complex} if the morphism $0\to A\in I\mbox{-cell}$. The collection of $I$-cell complexes is denoted ${\rm Cell}(I)$.

\begin{lemma} \label{lem:cellcof} Let $\C$ be an arbitrary category and $I$ be a class of morphisms in $\C$. If the transfinite compositions of pushouts of morphisms in $I$ exist, then $I\mbox{-}{\rm cell}\subseteq I\mbox{-}{\rm cof}$.
\end{lemma}
\begin{proof}  Assume that we have a commutative diagram
\[\xymatrixcolsep{2pc}\xymatrix@C10pt@R10pt{C\ar[r]^{g}\ar[d]_{f} &
A\ar[d]^{j}\\
D \ar[r]^{h}& B}
\]
such that $j\in I$-inj and $f$ is a relative $I$-cell complex.

Let $f\colon C\to D$ be the transfinite composition of the $\lambda$-sequence $C=X_0\xrto{f_0} X_1\xrto{f_1} X_2\cdots\to X_\alpha\xrightarrow{f_\alpha} X_{\alpha+1}\to \cdots  (\alpha+1<\lambda)$
with the $f_\alpha$ being a pushout of a morphism in $I$. Let $\tau_\alpha\colon X_\alpha \to D={\rm colim}_{\alpha<\lambda}X_\alpha$ be the colimit morphism for all $\alpha<\lambda$. We will construct the morphism $u_\alpha\colon X_\alpha\to A$ by transfinite induction such that $ju_\alpha=h\tau_\alpha$ and $u_{\alpha+1}f_{\alpha}=u_\alpha$. Let $u_0=g$. Assume that $X_0\to X_1$ is the pushout of $i\colon E\to F$ in $I$:

\[\xymatrixcolsep{2pc}\xymatrix@C28pt@R18pt{E\ar[r]\ar[d]_{i}&
X_0\ar[r]^{g}\ar[d]^{f_0} & A \ar[d]^{j}\\
F \ar[r]\ar@{.>}@/_10pt/[urr]|(.35){v}& X_1 \ar[r]^{h \tau_1} & B.}
\]
Since $j\in I\mbox{-}{\rm inj}$, there is a lifting $v\colon F\to A$ which induces a morphism $u_1\colon X_1\to A$ such that $ju_1=h\tau_1$ and $u_1f_0=g$ by the universal property of pushouts. Assume that we have defined $u_\alpha\colon X_\alpha\to A$ for all $\alpha<\beta$. If $\beta$ is a limit ordinal, let $u_\beta\colon X_\beta={\rm colim}_{\alpha<\beta}X_\alpha \to A$ be the induced morphism by $u_\alpha$ for $\alpha<\beta$, then $ju_\beta=h\tau_\beta$. If $\beta$ has a predecessor $\alpha$, then replace $f_0\colon X_0\to X_1$ by $f_\alpha$ in the case of $\alpha=0$, we can construct a morphism $u_{\alpha+1}\colon X_{\alpha+1}\to A$ satisfying $u_{\alpha+1}f_\alpha= u_\alpha$ and $ju_{\alpha+1}=h\tau_{\alpha+1}$ which completes our transfinite induction. Therefore, let $u\colon D={\rm colim}_{\alpha<\lambda}X_\alpha\to A$ be the induced morphism by the $u_\alpha$, then $ju=h$ and $ugh=g$ by the universal property of colimits.
\end{proof}

\begin{lemma} \label{le:transcell} Let $\C$ be a category and $I$ a class of morphisms in $\C$. If the transfinite compositions of pushouts of morphisms in $I$ exist. Then the transfinite compositions of $I\mbox{-}\mathrm{cell}$ exist and are in $I\mbox{-}\mathrm{cell}$.
\end{lemma}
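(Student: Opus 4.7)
The plan is to realize any transfinite composition of morphisms in $I\mbox{-cell}$ as a single relative $I$-cell complex of possibly larger length, by concatenating the unfolding of each step. Let $Y:\mu\to\C$ be a $\mu$-sequence with each $Y_\beta\to Y_{\beta+1}$ a relative $I$-cell complex; I must show that $\mathrm{colim}_{\beta<\mu}Y_\beta$ exists and that the transfinite composition $Y_0\to\mathrm{colim}_{\beta<\mu}Y_\beta$ lies in $I\mbox{-cell}$. For each $\beta$ with $\beta+1<\mu$, I first choose a $\lambda_\beta$-sequence $X^\beta:\lambda_\beta\to\C$ witnessing $Y_\beta\to Y_{\beta+1}$ as a transfinite composition of pushouts of morphisms in $I$, with $X^\beta_0=Y_\beta$ and $\mathrm{colim}_{\alpha<\lambda_\beta}X^\beta_\alpha=Y_{\beta+1}$.

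Next I form the ordinal sum $\lambda=\sum_{\beta<\mu}\lambda_\beta$ and write each $\alpha<\lambda$ uniquely as $\alpha=\sigma_\beta+\delta$ with $\sigma_\beta=\sum_{\gamma<\beta}\lambda_\gamma$ and $\delta<\lambda_\beta$. I define a diagram $Z:\lambda\to\C$ by $Z_\alpha=X^\beta_\delta$, with transitions glued from the $X^\beta$ internally to each block and from the canonical morphisms $X^\beta_\delta\to Y_{\beta+1}=X^{\beta+1}_0$ at block boundaries. The bulk of the argument is to verify that $Z$ is a $\lambda$-sequence of pushouts of morphisms in $I$. The delicate points are all at limit ordinals of $\lambda$: if $\alpha=\sigma_\beta+\delta$ with $\delta$ a limit ordinal less than $\lambda_\beta$, the required isomorphism $\mathrm{colim}_{\alpha'<\alpha}Z_{\alpha'}\simeq Z_\alpha$ is immediate because $X^\beta$ is a $\lambda_\beta$-sequence; at a boundary ordinal $\alpha=\sigma_\beta$ with $\beta<\mu$ a limit ordinal, a cofinality argument combined with $Y$ being a $\mu$-sequence identifies $\mathrm{colim}_{\alpha'<\alpha}Z_{\alpha'}$ with $\mathrm{colim}_{\gamma<\beta}Y_{\gamma+1}=\mathrm{colim}_{\gamma<\beta}Y_\gamma=Y_\beta=Z_\alpha$. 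Once these checks are in place, the standing hypothesis of the lemma produces the transfinite composition of $Z$, and the same cofinality computation over the whole ordinal $\lambda$ identifies $\mathrm{colim}_{\alpha<\lambda}Z_\alpha$ with $\mathrm{colim}_{\beta<\mu}Y_\beta$. Since $Z_0=Y_0$, this simultaneously proves existence of the transfinite composition of $Y$ and exhibits it as a relative $I$-cell complex.

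The main obstacle I expect is a bookkeeping issue at \emph{successor} block boundaries: when $\lambda_\beta=\delta+1$ is a successor ordinal, one has $\mathrm{colim}_{\alpha<\lambda_\beta}X^\beta_\alpha=X^\beta_\delta=Y_{\beta+1}$, so the transition $Z_{\sigma_\beta+\delta}\to Z_{\sigma_{\beta+1}}$ that I glue in is an isomorphism rather than a genuine pushout of a morphism of $I$, which violates the literal definition of a relative $I$-cell complex. I will repair this by deleting exactly those redundant positions from $\lambda$ and working with the re-indexed ordinal $\lambda'$; the restricted diagram $Z':\lambda'\to\C$ is then a $\lambda'$-sequence of pushouts of morphisms in $I$ with the same colimit and the same transfinite composition as $Z$. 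Apart from this reindexing, the remainder of the argument is routine cofinality arithmetic and repeated use of the universal property of colimits.
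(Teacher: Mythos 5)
Your proposal is correct and follows essentially the same route as the paper: the paper invokes Hirschhorn's ``interpolation'' machinery (Propositions 10.2.8 and 10.2.13 and Definition 10.2.11 of \cite{Hirschhorn03}) to splice the witnessing sequences into one long sequence of pushouts of morphisms in $I$, which is exactly the ordinal-sum concatenation you carry out by hand, including the reindexing needed to discard the degenerate isomorphism transitions at successor block boundaries. The only detail worth adding is the limit ordinal $\sigma_{\gamma+1}$ arising when $\lambda_\gamma$ itself is a limit ordinal (so neither of your two displayed cases literally applies); the required colimit identification there is immediate from the choice of $X^\gamma$ as a sequence with colimit $Y_{\gamma+1}$.
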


\begin{proof}  Let $\lambda$ be an ordinal and $X$ be a $\lambda$-sequence $X_0\to X_1\to X_2\to \cdots \to X_\alpha\to \cdots (\alpha<\lambda)$
such that each morphism $X_\alpha\to X_{\alpha+1}$ for $\alpha+1<\lambda$ is the transfinite composition of the $\gamma_\alpha$-sequence $X_\alpha=W_0^\alpha\to W_1^\alpha\to W_2^\alpha\to \cdots \to W_\beta^\alpha\to \cdots (\beta<\gamma_\alpha)$
of pushouts of morphisms in $I$. By {\it interpolating} these sequences for all $\alpha<\lambda$ into the $\lambda$-sequence $X$ \cite[Definition 10.2.11]{Hirschhorn03}, we get a $\mu$-sequence $Y\colon \mu\to \C$ of pushouts of morphisms in $I$ \cite[Propositions 10.2.8 and 10.2.13]{Hirschhorn03}. By assumption, the transfinite composition of the $\mu$-sequence $Y$ exists, that is, ${\rm colim}_{\gamma<\mu}Y_\gamma$ exists. By the construction of $Y$, we have ${\rm colim}_{\alpha<\lambda}X_\alpha={\rm colim}_{\gamma<\mu}Y_\gamma$ and the transfinite composition $X_0\to {\rm colim}_{\alpha<\lambda}X_{\alpha}$ is the transfinite composition $Y_0\to {\rm colim}_{\gamma<\mu}Y_{\gamma}$ which is a relative $I$-cell complex.
\end{proof}

\subsection{The generalized small object argument}

Recall that, the {\it cofinality} of a limit ordinal $\lambda$, denoted by ${\rm cf}(\lambda)$, is the smallest cardinal $\kappa$ such that there exists a subset $T$ of $\lambda$ with $|T|=\kappa$ and ${\rm sup}(T)=\lambda$.

Let $\C$ be a category. Let $\kappa$ be a cardinal and $\D$ a class of morphisms in $\C$. An object $A$ of $\C$ is said to be {\it $\kappa$-small relative to $\D$} if for every ordinal $\lambda$ with ${\rm cf}(\lambda)>\kappa$ and every $\lambda$-sequence $X\colon \lambda\to \C$ of morphisms in $\D$, the natural morphism $${\rm colim}_{\alpha<\lambda}\Hom_\C(A, X_\alpha)\to \Hom_\C(A, {\rm colim}_{\alpha<\lambda}X_\alpha)$$
is an isomorphism. An object $A$ in $\C$ is called {\it small relative to $\D$} if it is $\kappa$-small relative to $\D$ for some cardinal $\kappa$.

\begin{definition}\label{def:soa}  Let $\C$ be a category. We say a class $I$ of morphisms in $\C$ {\it permits the generalized small object argument} if the following conditions hold:

(i) The transfinite compositions of pushouts of morphisms in $I$ exist.

(ii)  There is a cardinal $\kappa$, such that the domains of the morphisms of $I$ are $\kappa$-small relative to $I$-cell.

(iii) For every morphism $f$ in $\C$, there is a morphism $g\in I\mbox{-}{\rm cell}$ equipped with a morphism $s\colon g\to f$, such that any morphism $i\to f$ with $i\in I$ factors through $s$.
\end{definition}

\begin{lemma} \label{lem:po} Let $I$ be a class of morphisms of a category $\C$ such that the transfinite compositions of pushouts of morphisms in $I$ exist. Then any pushouts of morphisms in $I\mbox{-}\mathrm{cell}$ exist and are in $I\mbox{-}\mathrm{cell}$.

\end{lemma}
\begin{proof} Assume that $f\colon A\to B$ is a relative $I$-cell complex. Then there is an ordinal $\lambda$ and a $\lambda$-sequence $A=X_0\xrto{f_0} X_1\xrto{f_1} \cdots \to X_\beta\xrto{f_\beta} X_{\beta+1}\to  \cdots  (\beta+1<\lambda)$ such that every $f_\beta$ is a pushout of a morphism in $I$ and $f$ is the transfinite composition of $X$. Let
\[\xymatrixcolsep{2pc}\xymatrix@C10pt@R10pt{A\ar[r]^{f}\ar[d]_{g_0}&
B\\
E_0 &}
\]
be any diagram in $\C$.

We can construct a commutative diagram by transfinite induction
\[
\xymatrixcolsep{2pc}\xymatrix@C10pt@R10pt{X_0\ar[r]^{f_0}\ar[d]_{(*)\qquad}_{g_0}& X_1\ar[r]^{f_1} \ar[d]_{g_1} & \cdots \ar[r]& X_{\beta} \ar[d]_{g_{\beta}} \ar[r]^{f_\beta} & \cdots \\
E_0 \ar[r]^{h_0}& E_1\ar[r]^{h_1}&  \cdots \ar[r] & E_\beta \ar[r]^{h_\beta} & \cdots}
\]
such that each square diagram is a pushout. In fact, since $f_0$ is a pushout of a morphism in $I$, say $i\colon C\to D$, we have a pushout diagram
\[\xymatrixcolsep{2pc}\xymatrix@C10pt@R10pt{C\ar[r]^{s}\ar[d]_{i}& X_0\ar[d]^{f_0}\\
D \ar[r] & X_1}
\]
By assumption, the pushout of $i$ along $g_0s$ exists:
\[\xymatrixcolsep{2pc}\xymatrix@C10pt@R10pt{C\ar[r]^{g_{0}s}\ar[d]_{i}& E_0\ar[d]^{h_0}\\
D \ar[r] & E_1}
\]
which induces a pushout diagram
\[\xymatrixcolsep{2pc}\xymatrix@C10pt@R10pt{X_0\ar[r]^{f_0}\ar[d]_{g_0}& X_1 \ar[d]^{g_1}\\
E_0 \ar[r]^{h_0} & E_1}
\]
 Assume that we have defined $E_\alpha$ and $g_\alpha\colon X_\alpha\to E_\alpha$ for all $\alpha<\beta$. If $\beta$ is a limit ordinal, define $E_\beta={\rm colim}_{\alpha<\beta} E_\alpha$ which exists by assumption, and define $g_\beta$ to be the morphism induced by $g_\alpha$. If $\beta$ has a predecessor $\alpha$, ie, $\beta=\alpha+1$, define $E_\beta=E_{\alpha}\coprod_{X_\alpha}X_{\alpha+1}$, and $g_{\beta}$ to be the pushout of $g_\alpha$ along $f_\alpha$. Therefore we have a $\lambda$-sequence $E_0\xrto{h_0} E_1\xrto{h_1} \cdots E_\beta \xrto{h_\beta} E_{\beta+1} \to \cdots  (\beta+1<\lambda),$
  its transfinite composition $E_0\to {\rm colim}_{\alpha<\lambda}E_\alpha$ exists by assumption and is in $I$-cell by construction. The commutative diagram $(*)$ induces a desired pushout diagram
  \[\xymatrixcolsep{2pc}\xymatrix@C10pt@R10pt{A\ar[r]^-{f}\ar[d]_{g_0} & B={\rm colim}_{\alpha<\lambda}X_\alpha \ar[d]\\
E_0 \ar[r] & {\rm colim}_{\alpha<\lambda}E_\alpha}
\]
in $\C$. \end{proof}
Now we can prove the following generalized Quillen small object argument, which is essentially given in \cite[Theorem 1.1]{Chorny}.

\begin{theorem} $(${\rm The generalized small object argument}$)$.\ Let $\C$ be an arbitrary category and $I$ a class of morphisms in $\C$. Suppose that $I$ permits the generalized small object argument. Then every morphism $f\colon X\to Y$ in $\C$ admits a factorization $f=\delta(f)\gamma(f)$, where $\gamma(f)\in I\mbox{-}\mathrm{cell}$ and $\delta(f)\in I\mbox{-}\mathrm{inj}$.
\end{theorem}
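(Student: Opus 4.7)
The plan is to run a transfinite construction in the spirit of Quillen's classical small object argument, verifying at each step that the operations used are among those guaranteed by Definition 2.5. First, using condition (iii), I would choose a cardinal $\kappa$ so large that the domain of every morphism in $I$ is $\kappa$-small relative to $I$-cell, and then fix an ordinal $\lambda$ with $\operatorname{cf}(\lambda) > \kappa$.

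Given $f : X \to Y$, I would construct by transfinite recursion a $\lambda$-sequence $Z : \lambda \to \C$ with $Z_0 = X$, together with compatible morphisms $p_\alpha : Z_\alpha \to Y$ such that $p_0 = f$. At a successor stage, let $S_\alpha$ denote the set of commutative squares
\[\xymatrix{C_s \ar[r]^{a_s}\ar[d]_{i_s} & Z_\alpha \ar[d]^{p_\alpha} \\ D_s \ar[r]^{b_s} & Y}\]
indexed by $s$, with each $i_s \in I$. By hypothesis (ii) the coproduct $\coprod_s i_s$ exists, and by Lemma 2.6(b) its pushout along $\coprod_s a_s$ exists and lies in $I$-cell; this pushout furnishes $Z_\alpha \to Z_{\alpha+1}$, while the universal property combined with $\coprod_s b_s$ and $p_\alpha$ determines $p_{\alpha+1}$. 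At a limit ordinal $\beta < \lambda$, set $Z_\beta = \operatorname{colim}_{\alpha<\beta} Z_\alpha$, which exists because the restricted sequence consists of $I$-cell morphisms and Lemma 2.4 ensures that its transfinite composition exists (and is again in $I$-cell); take $p_\beta$ induced from the $p_\alpha$. Finally let $Z_\lambda = \operatorname{colim}_{\alpha<\lambda} Z_\alpha$, let $\gamma(f) : X \to Z_\lambda$ be the transfinite composition (in $I$-cell by Lemma 2.4), and let $\delta(f) : Z_\lambda \to Y$ be the induced map, so that $f = \delta(f)\gamma(f)$ automatically.

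The main obstacle, and the only place the smallness condition really bites, is to show $\delta(f) \in I$-inj. Given a lifting problem
\[\xymatrix{C \ar[r]^g \ar[d]_i & Z_\lambda \ar[d]^{\delta(f)} \\ D \ar[r]^c & Y}\]
with $i \in I$, I would observe that by construction $Z$ is a $\lambda$-sequence of morphisms in $I$-cell, so since $C$ is $\kappa$-small relative to $I$-cell and $\operatorname{cf}(\lambda) > \kappa$, the map $g$ factors as $g = \tau_\alpha a$ for some $\alpha < \lambda$ and some $a : C \to Z_\alpha$, where $\tau_\alpha : Z_\alpha \to Z_\lambda$ is the structural morphism. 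The resulting square with legs $i$, $p_\alpha$, $a$ and $c$ is one of the squares of $S_\alpha$, hence the stage-$(\alpha+1)$ pushout provides a canonical map $D \to Z_{\alpha+1}$; composing with $Z_{\alpha+1} \to Z_\lambda$ yields the desired lift, and commutativity of the two triangles is checked by chasing the pushout square and the universal property defining $p_{\alpha+1}$.
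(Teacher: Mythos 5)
Your proposal is correct and follows essentially the same route as the paper: the paper's proof simply verifies (via Lemmas 2.4 and 2.6) that the operations needed for Hovey's Theorem 2.1.14 are available and then invokes that argument, which is exactly the transfinite construction you have written out in detail. Your explicit verification that each successor-stage pushout, each limit-stage colimit, and the final smallness-based lifting step use only the operations guaranteed by Definition 2.5 matches the intended argument precisely.
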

\begin{proof} By Lemmas 2.4 and 2.6, the proof of \cite[Theorem 1.1]{Chorny} works here.
\end{proof}

\section{Complete cotorsion pairs in exact categories}
In this section, we recall the definition of complete cotorsion pairs in exact categories and prove our main result. 

\subsection{Cotorsion pairs in exact categories }
The concept of an exact category is due to D. Quillen \cite{Quillen73}, a simple axiomatic description can be found in \cite[Appendix A]{Keller90}.  Roughly speaking {\it an exact category } is an additive category $\A$ equipped with a class $\mathcal{E}$ of {\it kernel-cokernel sequences} $A\xrto{s} B\xrto{t} C$ in $\A$ such that $s$ is the kernel of $t$ and $t$ is the cokernel of $s$. The class $\mathcal{E}$ satisfies exact axioms, for details, we refer the reader to \cite[Definition 2.1]{Buhler10}. Given an exact category $\A$, we will call a kernel-cokernel sequence a {\it conflation} if it is in $\mathcal{E}$. The morphism $s$ in a conflation $A\xrto{s} B\xrto{t} C$  is called an {\it inflation} and the morphism $t$ is called a {\it deflation}.

Given an exact category $\A$, recall that a pair $(\X, \Y)$ of classes of objects of $\A$ is called a {\it cotorsion pair} in $\A$ if $\X={^{\perp}\Y}:=\{X\in \A \ | \ {\rm Ext}^1_\A(X, Y)=0, \forall \ Y \in \Y\}$ and $\Y={\X^{\perp}}:=\{Y\in \A \ | \ {\rm Ext}^1_\A(X, Y)=0, \forall \ X\in \X\}.$ The cotorsion pair $(\X, \Y)$ is called {\it complete} if for each $A\in \A$ there exist a conflation $ Y\to X\to A$ and $ A\to Y'\to X'$ such that $X,X'\in \X$ and $Y, Y'\in \Y$.

 A cotorsion pair $(\X, \Y)$ is said to be {\it cogenerated by a class} if there is a class $\mathcal{S}$ of objects in $\A$ such that $\Y=\mathcal{S}^{\perp}$.

\subsection{Eklof Lemma}

 Recall that, given a class $\mathcal{S}$ of objects in an exact category $\A$, an object $A$ of $\A$ is called a {\it transfinite extension of $\mathcal{S}$} if the morphism $0\to A$ is the transfinite composition of a $\lambda$-sequence $X_0\xrto{i_0} X_1\xrto{i_1} \cdots \to X_\beta\xrto{i_\beta} X_{\beta+1}\to  \cdots  (\beta+1<\lambda)$
such that each $i_\beta$ is an inflation with a cokernel in $\mathcal{S}$.

\vskip5pt
The following lemma is proved in \cite[Lemma 6.2]{Hovey02}, called {\it Eklof Lemma}.

\begin{lemma}  \label{lem:Ekloflem} Let $\A$ be an exact category and $A\in \A$. Then ${^{\perp}A}$ is closed under transfinite extensions and retracts.
\end{lemma}

\subsection{Completeness of cotorsion pairs}

Given a class $I$ of inflations in an exact category $\A$, let ${\rm Cok}(I)=\{A\in \A \ | \ A\cong {\rm coker}(i),\ \mbox{for some} \ i\in I\}$.
Following \cite[Definition 2.3]{Saorin/Stovicek}, $I$ is {\it homological} if for any object $A\in \A$, the morphism $A\to 0$ belongs to $I\mbox{-inj}$ if and only if $A\in {\rm Cok}(I)^{\perp}$.

Let $\A$ be an exact category. A collection $\T$ of objects of $\A$ is called a {\it class of generators} of $\A$ if for any object $A\in \A$, there is a deflation $\coprod_{s\in S}G_s\to A$ with $G_s\in \T$ and $S$ a set. Dually, $\T$ is called a {\it a class of cogenerators} of $\A$ if for any object $A\in \A$, there is an inflation $A\to \prod_{s\in S}G_s$ with $G_s\in \T$ and $S$ a set.

Recall that an exact category is called {\it weakly idempotent complete} ({\it WIC}, for short) if every split monomorphism is an inflation.%\cite[Definition 7.2]{Buhler10}

The following is our main result.

\begin{theorem}  \label{thm:ccp} Let $\A$ be an exact category. Let $I$ be a homological class of inflations which permits the generalized small object argument. Denote $\F=({\rm Cok}(I))^{\perp}$.

$(\mathrm{i})$ If each relative $I$-cell complex with codomain in $\F$ is an inflation and ${^\perp\F}$ is a class of generators, then $({^\perp\F}, \F)$ is a complete cotorsion pair in $\A$.

$(\mathrm{ii})$ If each morphism in $I$-inj with domain in ${\rm Cell}(I)$ is a deflation and $\F$ is a class of cogenerators, then $({\rm Cof}(I), \F)$ is a complete cotorsion pair in $\A$.

$(\mathrm{iii})$ If each relative $I$-cell complex with codomain in $\F$ is an inflation, ${\rm Cof}(I)$ is a class of generators and $\A$ is WIC, then $({\rm Cof}(I), \F)$ is a complete cotorsion pair in $\A$.

$(\mathrm{iv})$ If each relative $I$-cell complex with codomain in $\F$ is an inflation and each morphism in $I$-inj with domain in ${\rm Cell}(I)$ is a deflation, then $({\rm Cof}(I), \F)$ is a complete cotorsion pair in $\A$.
\end{theorem}
\begin{proof} (i) Firstly, for each $A\in \A$, we claim that there is a conflation $A\to B\to C$ such that $B\in \F$ and $C\in {^{\perp}\F}$.
In fact, since $I$ permits the generalized small object argument, we can factor $A\to 0$ as the composition $A\xrto{f} B\to 0$ with $f\in I\mbox{-}{\rm cell}$ and $B\to 0\in I\mbox{-}{\rm inj}$. By assumption, $f$ is an inflation and $B\in \F$. Since $I\mbox{-}{\rm cell}$ is closed under pushouts by Lemma \ref{lem:po}, we know that $C={\rm coker}f\in {\rm Cell}(I)$. Thus $C$ is a transfinite extension of ${\rm Cok}I$ and therefore is in ${^\perp\F}$ by Lemma \ref{lem:Ekloflem}.

Secondly, for each $A\in \A$, we claim that there is a conflation $T\to U\to A$ such that $T\in \F$ and $U\in {^\perp\F}$. In fact, since ${^\perp\F}$ is closed under coproducts, and by assumption, it is a class of generators of $\A$, there exists a deflation $p\colon G\to A$ with $G\in {^\perp\F}$. Let $K={\rm ker}p$. By the first claim, there is a conflation $K\to T \to C$ with $T\in \F$ and $C\in {^\perp\F}$. By \cite[Proposition 2.12]{Buhler10}, there is a commutative diagram of conflations:
\[\xymatrixcolsep{2pc}\xymatrix@C10pt@R10pt  {K  \ar[r] \ar[d]&
T  \ar[r]\ar[d] & C \ar@{=}[d]\\
G  \ar[r] \ar[d]_{p} & U \ar[d]
\ar[r]& C  \\
A \ar@{=}[r] & A &}
\]
such that the upper-left square is a pushout diagram. Since ${^\perp\F}$ is closed under extensions, we know that $U\in {^\perp\F}$. Thus $T\to U\to A$ is the desired conflation. Therefore $({^\perp \F}, \F)$ is a complete cotorsion pair.

(ii) Firstly, for each $A\in \A$, we claim that there is a conflation $K\to U\to A$ such that $K\in \F$ and $U\in {\rm Cell}(I)$.
In fact, since $I$ permits the generalized small object argument, we can factor $0\to A$ as the composition $0\to U\xrto{f} A$ with $U\in{\rm Cell}(I)$ and $f\in I\mbox{-}{\rm inj}$. Since ${^\perp \F}$ is closed under transfinite extensions by Lemma \ref{lem:Ekloflem}, we know that $U\in {^\perp \F}$. By assumption, $f$ is a deflation. Since $I\mbox{-}{\rm inj}$ is closed under pullback, we know that $K=\ker f\to 0\in I\mbox{-}{\rm inj}$ which means $K\in \F$.

Secondly, for each $A\in \A$, we claim that there is a conflation $A\to F\to C$ such that $F\in \F$ and $C\in {^\perp\F}$. In fact, this can be proved dually with the proof of the second claim of (i).

Finally, we claim that ${\rm ^\perp \F}={\rm Cof}(I)$. In fact, assume that $M\in {\rm Cof}(I)$, then $0\to M\in I\mbox{-}{\rm cof}$. Using the generalized small object, we can factor $0\to M$ as the composition $0\to N\xrto{s} M$ with $N\in {\rm Cell}(I)$ and $s\in I\mbox{-}{\rm inj}$. Thus there is a morphism $t\colon M\to N$ such that $st=1_M$, ie, $M$ is a retract of $N$. Therefore $M\in {^{\perp}\F}$ since ${^{\perp}\F}$ is closed under retracts by Lemma \ref{lem:Ekloflem}. So ${\rm Cof}(I)\subseteq{\rm ^\perp \F}$. Conversely, assume that $A\in {^\perp \F}$, then the conflation $K\to U\to A$ in the first claim splits. Therefore $A$ is a direct summand of $U$. Since ${\rm Cell}(I)\subseteq {\rm Cof}(I)$ and ${\rm Cof}(I)$ is closed under retracts, we know that $A\in {\rm Cof}(I)$. So ${\rm ^\perp \F}\subseteq {\rm Cof}(I)$, and then ${\rm ^\perp \F}={\rm Cof}(I)$.

(iii) We claim that each morphism $f\colon A\to B$ in $I\mbox{-}{\rm inj}$ with $A\in {\rm Cell}(I)$ is a deflation. In fact, since ${\rm Cof}(I)$ is a class of generators and is closed under coproducts, there is a deflation $p\colon G\to B$ with $G\in {\rm Cof}(I)$. So there is a lift $g$ such that $p=fg$. Thus $f$ is a deflation by the obscure axiom \cite[Proposition 7.6]{Buhler10}. Therefore, by the proof of (ii), we know that ${\rm Cof}(I)={\rm ^\perp \F}$. Thus $({\rm Cof}(I), \F)$ is a complete cotorsion pair by (i).

(iv) This follows from the proofs of (ii) and (iii). \end{proof}

\begin{example}\label{ex:ccpofmodule} Let $R$ be a ring with unit. Let $\A=R\mbox{-}{\rm Mod}$ the category of left $R$-modules. Let $\P$ be the class of all projective $R$-modules. Take $\kappa\geq\aleph_0|R|$, where $|R|$ denotes the cardinality of $R$. Then each $P\in \P$ is $\kappa$-small relative to any class of morphisms in $\A$. Let $I=\{P\to Q\ | \ \forall P, Q\in \P\}$ be a class of monomorphisms of $\A$. Then $I$ is homological and permits the generalized small object argument. Therefore, we have a complete cotorsion pair $({^\perp({\rm Cok}(I)^\perp)}, {\rm Cok}(I)^\perp)$ in $\A$ (note that, in this case, ${\rm Cok}(I)$ consists of $R$-modules with projective dimension $\leq 1$).
\end{example}

\section{Complete cotorsion pairs in the category of chain complexes}

Let $\A$ be an exact category with colimits and pullbacks. Let $\kappa$ be a cardinal. A collection $\P(\kappa)$ of projectives of $\A$ is said to be a class of {\it $\kappa$-small projective generators} if $\P(\kappa)$ is a class of generators of $\A$ and  each object $P\in \P(\kappa)$ is $\kappa$-small relative to split inflations with projective cokernels.

 We denote ${\rm Ch}(\A)$ the category of all chain complexes of objects of $\A$ with the form $\cdots \to  X^{n-1}\xrto{d^{n-1}}  X^n\xrto{d^n} X^{n+1}\to \cdots$ and morphisms are chain complex morphisms. We consider the category ${\rm Ch}(\A)$ as an exact category where conflations degreewise lie in $\A$.
 Given an object $A\in \A$, define $S_n(A)\in {\rm Ch}(\A)$ by $S_n(A)^n=A$ and $S_n(A)^k=0$ for $k\neq n$. Similarly, define $D_n(A)^k=A$ if $k=n$ or $n+1$, but $D_n(A)^k=0$ for other values of $k$, and whose differential is the identity in degree $n$. Note that there is a split conflation $S_{n+1}(A)\to D_n(A)\to S_n(A)$ in ${\rm Ch}(\A)$ for each $n\in \mathbb{Z}$.

  Denote $I=\{S_{n+1}(P)\to D_n(P)\ | \ \forall P\in \P(\kappa), n\in \mathbb{Z} \}$ and $\F={\rm Cok}(I)^\perp$, we have the following corollary.
 \begin{corollary}\label{cor:ccp} $({\rm Cof}(I), \F)$ is a complete cotorsion pair in ${\rm Ch}(\A)$.
\end{corollary}
 \begin{proof} Since $I$ consists of split inflations, we know that each morphism in $I$-cell is also split inflation. By \cite[Lemma 4.3]{Christensen/Hovey}, we know that $S_n(P)$ with $P\in \P(\kappa)$ is $\kappa$-small relative to $I$-cell for all $n\in \mathbb{Z}$. For a morphism $f\colon X\to Y$ in ${\rm Ch}(\A)$ and $i\colon S_{n+1}(P)\to D_n(P)$ in $I$, a morphism from $i$ to $f$ is in bijective correspondence with a commutative diagram
\[\xymatrixcolsep{2pc}\xymatrix@C10pt@R10pt{P\ar[r]\ar[d] &
\ker d^{n+1}_X\ar[d]^{f^{n+1}|_{\ker d^{n+1}_X}}\\
Y^n \ar[r]^{d^n_Y}& Y^{n+1}}
\]
 Let $W^n=\ker d_X^{n+1}\times _{Y^{n+1}}Y^n$ be the pullback. Then there is a deflation frome $P^n$ to $W^n$ such that $P^n$ is a coproduct of objects in $\P(\kappa)$ indexed by a set. Since $P$ is projective in $\A$, the above commutative diagram is in correspondence with a morphism $h^n\colon P\to P^n$. Take $g=\coprod_{n\in \mathbb{Z}}(S_{n+1}(P^n)\to D_n(P^n))$. Then $g$ is in $I$-cell by \cite[Proposition 10.2.7]{Hirschhorn03} and there is an induced morphism from $g$ to $f$ by the morphisms $P^n\to W^n$. By the construction of $g$, the needed factorization property of Definition \ref{def:soa}(iii) holds automatically. Therefore, $I$ permits the generalized small object argument.

Since the exact structure of ${\rm Ch}(\A)$ is given by degreewise conflations in $\A$, we know that $D_n(P)$ is projective in ${\rm Ch}(\A)$ for any $P\in \P(\kappa)$ and $n\in \mathbb{Z}$. From this, it is straightforward to verify $I$ is homological. Moreover, since $\A$ is WIC and ${\rm Cof}(I)$ is a class of generators, then the assertion follows from Theorem \ref{thm:ccp}(iii).
\end{proof}

\begin{example} \label{ex:relativemodelstructure}  Let $\A$ be a complete and cocomplete abelian category with a projective class $\P$ in the sense of \cite[Denfinition 1.1]{Christensen/Hovey}. A short exact sequence $A\xrto{f} B\xrto{g} C$ in $\A$ is called $\P$-exact if $\Hom_\A(P, A)\xrto{f_*} \Hom_\A(P, B)\xrto{g_*} \Hom_\A(P, C)$ is a short exact sequence of abelian groups for any $P\in \P$. The morphism $f$ in a $\P$-exact sequence $A\xrto{f} B\xrto{g} C$ is called a $\P$-inflation and the morphism $g$ is called a $\P$-deflation. Let $\mathcal{E}_\P$ be the class of all $\P$-exact sequences. Then $(\A, \mathcal{E}_\P)$ is an exact category. We will use $\A_\P$ to denote this exact category. Assume that $\P$ has {\it enough $\kappa$-small projectives} in the sense of \cite[Proposition 4.2]{Christensen/Hovey}. I.e., there is a collection $\P'$ of objectives in $\P$ such that every $\P'$-epimorphism is a $\P$-epimorphism and each object in $\P'$ is $\kappa$-small relative to split monomophisms with cokernels in $\P$. Let $I=\{S_{n+1}(P)\to D_n(P)\ | \ \forall P\in \P', n\in \mathbb{Z} \}$ and $\F={\rm Cok}(I)^\perp$. Then $({\rm Ch}(I), \F)$ is a complete cotorsion pair in ${\rm Ch}(\A_\P)$ by Corollary \ref{cor:ccp}.

It can be proved that $\F$ consists of acyclic chain complexes in ${\rm Ch}(\A_\P)$ (a complex $\cdots\to X^n\xrto{d^n} X^{n+1}\xrto{d^{n+1}}X^{n+2}\to \cdots$ is {\it acyclic} if each morphism $X^n\to X^{n+1}$ decomposes in $\A_\P$ as $X^n\xrto{e_n}\ker d^{n+1}\xrto{m_n}X^{n+1}$ where $e_n$ is an deflation and $m_n$ is an inflation; furthermore, $\ker d^{n+1}\xrto{m_n} X^{n+1}\xrto{e_{n+1}} \ker d^{n+2}$ is a conflation). By \cite[Theorem 2.2]{Hovey02}, ${\rm Cof}(I), \F$ and ${\rm Ch}(\A_\P)$ determines a projective closed model structure on ${\rm Ch}(\A_\P)$. It is easy to see that this model structure is Quillen equivalent to the relative closed model structure on ${\rm Ch}(\A)$ given in \cite[Theorem 2.2]{Christensen/Hovey}. So Christensen and Hovey 's relative closed model structure is the standard projective exact closed model structure in this sense.%Let $\P({\rm Ch}(\A))$ be the projective objects in the exact category ${\rm Ch}(\A)$ and there is a complete cotorsion pair $(\P({\rm Ch}(\A)), {\rm Ch}(\A))$ in ${\rm Ch}(\A)$.
\end{example}

\begin{example}
(1)  Let $\G$ be a Grothendieck category with a generator $G$. By \cite[Proposition A.2]{Hovey01}, $G$ is small relative to any class of morphisms of $\G$. Let $I=\{S_{n+1}(G)\to D_n(G)\ | \  n\in \mathbb{Z} \}$. Then $({\rm Cof}(I), {\rm Cok}(I)^\perp)$ is a complete cotorsion pair in ${\rm Ch}(\G_G)$ by the above example. This is in fact the \cite[Theorem 4.6]{Gillespie14}.%, which is the key step for Gillespie's construction of his {\it $\G$-derived category}.

(2) Let $R$ be a commutative ring. Let $\A$ be the category of $\lambda$-shaped diagrams of $R$-modules with $\lambda$ a small category. A $\lambda$-shaped diagram of sets $T$ is called an {\it orbit} if ${\rm colim}_\lambda T=*$. Let $\mathcal{O}_\lambda$ be the collection of all $\lambda$-orbits. For each orbit $T$, associate a diagram of free $R$-modules $P_T=R(T)$. Let $\P'=\{P_T\ | \ T\in \mathcal{O}_\lambda\}$, and let $\P$ be the projective class determined by $\P'$. Then $\P'$ is a class of $\aleph_0$-small projective generators of the $\P$-exact category $\A_\P$ by \cite[Section 4]{Chorny}. Therefore, $({\rm Cof}(I), {\rm Cok}(I)^\perp)$ is a complete cotorsion pair in ${\rm Ch}(\A_\P)$ by Corollary \ref{cor:ccp}, where $I=\{S_{n+1}(P)\to D_n(P)\ | \ \forall P\in \P', n\in \mathbb{Z} \}$.
\end{example}

\vskip10pt

\end{document}